\documentclass[a4paper,reqno]{amsart}
\usepackage[T1]{fontenc}
\usepackage[utf8]{inputenc}
\usepackage{lmodern}
\usepackage{amsmath,amssymb,mathtools}
\usepackage{microtype}
\usepackage[hidelinks,unicode]{hyperref}
\hypersetup{pdftitle={The Guo-Zhang-Qi conjecture and positive Laplace kernels for a digamma power family},pdfauthor={Valmir Krasniqi},pdfsubject={Digamma function; logarithmic complete monotonicity; positive Laplace kernels}}
\numberwithin{equation}{section}
\newtheorem{theorem}{Theorem}[section]
\newtheorem{lemma}[theorem]{Lemma}
\newtheorem{proposition}[theorem]{Proposition}
\newtheorem{corollary}[theorem]{Corollary}
\theoremstyle{remark}
\newtheorem{remark}[theorem]{Remark}
\newcommand{\Lap}{\mathcal{L}}
\newcommand{\dd}{\,\mathrm{d}}
\newcommand{\R}{\mathbb{R}}

\newcommand{\alzero}{\alpha_0}
\newcommand{\alstar}{\alpha_*}
\newcommand{\del}{\delta_0}

\newcommand{\Ph}{\Phi_\alpha}

\allowdisplaybreaks[1]
\title[The Guo--Zhang--Qi conjecture]{The Guo--Zhang--Qi conjecture and positive Laplace kernels for a digamma power family}
\author{Valmir Krasniqi}
\address{Institute of Science, Technology, Engineering and Mathematics (STEM),
Rr. Nekibe Kelmendi Nr. 26, 10000 Prishtinë, Republic of Kosovo}
\email{valmir@stem-ks.org}
\subjclass[2020]{Primary 33B15; Secondary 26A48, 44A10}
\keywords{Digamma function, logarithmic complete monotonicity, Laplace transform, positive kernel, parameter threshold}
\date{}
\begin{document}
\begin{abstract}
We study logarithmic complete monotonicity of a power family built from the digamma function.
An explicit Laplace representation of its negative logarithmic derivative reduces the question to positivity of a continuous kernel.
We prove a strict kernel bound on an explicit parameter range and obtain uniform lower bounds for all alternating logarithmic derivatives.
In particular, this gives an independent proof of the property conjectured by Guo, Zhang, and Qi on the entire positive half-line.
We also characterize the exact admissible parameter threshold by a variational minimum and show that the explicit sufficient bound is not optimal.
The argument combines monotonicity of an auxiliary digamma kernel with a regularized logarithmic convolution.
An appendix identifies a false auxiliary inequality in an earlier proof, while the present argument independently establishes its stated conclusion.
\end{abstract}
\maketitle
\enlargethispage{4pt}

\section{Introduction and the main result}

A function $g\in C^\infty(0,\infty)$ is called \emph{completely monotone} if
\begin{equation}\label{eq:cm}
 (-1)^n g^{(n)}(t)\ge0,\qquad t>0,\quad n=0,1,2,\ldots.
\end{equation}
A positive function $f\in C^\infty(0,\infty)$ is called \emph{logarithmically completely monotone} if
\begin{equation}\label{eq:lcm}
 (-1)^m(\log f)^{(m)}(t)\ge0,\qquad t>0,\quad m\ge1.
\end{equation}
We use the word \emph{strictly} when all the corresponding inequalities are strict.
The property in \eqref{eq:lcm} is equivalent to complete monotonicity of $-(\log f)'$.
The Bernstein--Widder theorem characterizes completely monotone functions as Laplace transforms of positive measures; see \cite{SSV,Widder}.
For the functions considered here, we construct the representing density directly.

Throughout the paper,
\begin{equation}\label{eq:theta}
 \Theta(t)=t\bigl(\log t-\psi(t)\bigr),\qquad
 f_\alpha(t)=\exp\bigl(-[\Theta(t)+\alpha]\log t\bigr),
\end{equation}
where $t>0$, $\alpha\in\R$, $\psi=\Gamma'/\Gamma$, and $\gamma=-\psi(1)$.
Alzer \cite{Alzer} proved that $t^\beta(\log t-\psi(t))$ is completely monotone on $(0,\infty)$ precisely when $\beta\le1$.
In particular, $\Theta$ belongs to this class.

The question arose from the gamma-function inequalities of Guo, Zhang, and Qi, who conjectured in \cite[Remark~8]{GZQ} logarithmic complete monotonicity on $(0,\infty)$ of
\begin{equation}\label{eq:q}
 q(t)=f_\gamma(t)=t^{\,t(\psi(t)-\log t)-\gamma}.
\end{equation}
Krasniqi and Shabani \cite{KS2014} subsequently claimed a proof on the whole positive half-line.
Guo and Qi \cite{GQ2015} identified the interval restriction in that product argument and proved the assertion on $(0,1)$.
The corrected arXiv version of Krasniqi and Shabani \cite{KS2016} likewise states and proves the result on $(0,1)$.
The reason for this restriction is visible in the factorization
\begin{equation}\label{eq:oldfactor}
 \log q(t)=\bigl(\Theta(t)+\gamma\bigr)(-\log t),
\end{equation}
because $-\log t$ is completely monotone on $(0,1)$, but not on $(0,\infty)$.
Closure under products therefore does not establish the assertion on the full half-line by this route.
Guo and Qi \cite[Section~4]{GQ2015} also considered the parameter family \eqref{eq:theta}, derived the necessary condition $\alpha\ge-1/2$, and posed determination of the best admissible parameter as an open problem.

Bouali \cite[Proposition~1.8]{Bouali} later stated the full-half-line property for $f_\alpha$ whenever $\alpha\ge-1/4$.
That assertion includes the original conjecture and is an essential part of its history.
However, an auxiliary inequality invoked in the argument surrounding \cite[Eq.~(1.11), p.~10]{Bouali} is false, as the analytic counterexample in Appendix~\ref{sec:appendix} shows.
This invalidates that step, without disproving Bouali's stated conclusion.
The present Laplace-kernel proof establishes that conclusion independently and extends it to a larger explicit parameter range.

Set
\begin{equation}\label{eq:constants}
 \del=\dfrac{e^{-\gamma-1}}{12},\qquad
 \alzero=-\dfrac12+\del
       =-0.482787549912\ldots.
\end{equation}
The decimal is included only for orientation; all proofs use the exact expression.

\begin{theorem}\label{thm:main}
For every $\alpha\ge\alzero$, the function $f_\alpha$ is strictly logarithmically completely monotone on $(0,\infty)$.
More precisely, for every integer $m\ge1$ and every $t>0$,
\begin{equation}\label{eq:mainbound}
 (-1)^m(\log f_\alpha)^{(m)}(t)
 > (\alpha-\alzero)\dfrac{(m-1)!}{t^m}\ge0.
\end{equation}
The strict inequality remains valid at $\alpha=\alzero$.
\end{theorem}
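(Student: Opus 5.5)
The plan is to build an explicit Laplace representation of $-(\log f_\alpha)'$ with a continuous density $K_\alpha$, and to show that $K_\alpha(s)>\alpha-\alzero$ for every $s>0$. Granting this, $(m-1)!\,t^{-m}=\int_0^\infty s^{m-1}e^{-ts}\dd s$ gives
\[
(-1)^m(\log f_\alpha)^{(m)}(t)=\int_0^\infty s^{m-1}e^{-ts}K_\alpha(s)\dd s>(\alpha-\alzero)\frac{(m-1)!}{t^m}.
\]
Strictness, including at $\alpha=\alzero$, comes from $K_\alpha-(\alpha-\alzero)$ being continuous, nonnegative and not identically zero. The whole theorem therefore reduces to a pointwise kernel inequality.

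\textbf{Step 1: the representation.} Start from Binet's formula $\log t-\psi(t)=\frac1{2t}+\int_0^\infty h(s)e^{-ts}\dd s$, where $h(s)=\frac1{e^s-1}-\frac1s+\frac12$. The function $h$ satisfies $h(0)=0$, $h'(0)=\frac1{12}$, $h'>0$ and $h\uparrow\frac12$. Integrating by parts gives $\Theta(t)=\frac12+\int_0^\infty h'(s)e^{-ts}\dd s$, hence
\[
\frac{\Theta(t)+\alpha}{t}=\int_0^\infty\Bigl(\alpha+\tfrac12+h(s)\Bigr)e^{-ts}\dd s,
\]
and $\Theta'(t)=-\int_0^\infty sh'(s)e^{-ts}\dd s$. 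We have
\[
-(\log f_\alpha)'(t)=\frac{\Theta(t)+\alpha}{t}+\Theta'(t)\log t .
\]
The term $\Theta'(t)\log t$ is the delicate one, because $\log t$ is not a Laplace transform. To regularize it, use $\int_0^\infty e^{-ts}\log s\dd s=-(\gamma+\log t)/t$. Writing $g(s)=sh'(s)$, so that $g(0)=0$, we get $t\Theta'(t)=-\int_0^\infty g'(s)e^{-ts}\dd s$. Then
\[
\Theta'(t)\log t=-\gamma\Theta'(t)+t\Theta'(t)\cdot\frac{\log t+\gamma}{t}
\]
is a Laplace transform: the first term has density $\gamma g(s)$, and the second has the convolution density $\int_0^s g'(u)\log(s-u)\dd u$. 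Subtracting $\log s$ inside the convolution and compensating yields
\[
K_\alpha(s)=\alpha+\tfrac12+h(s)+g(s)\bigl(\gamma+\log s\bigr)+\int_0^s g'(u)\log\Bigl(1-\frac us\Bigr)\dd u .
\]
All integrals converge absolutely, and Fubini and the convolution theorem justify the identity for $t>0$.

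\textbf{Step 2: the kernel bound.} Since $-\alzero=\frac12-\del$, it suffices to prove
\[
h(s)+g(s)(\gamma+\log s)+\int_0^s g'(u)\log(1-u/s)\dd u>-\del,\qquad s>0.
\]
The middle term explains the constant $\del$. When $\log s\ge-\gamma$ it is nonnegative. When $\log s<-\gamma$, I would use the auxiliary monotonicity $g(s)=sh'(s)\le s/12$, i.e.\ $h'\le h'(0)$, which should follow from $h'$ being decreasing. This gives $g(s)(\gamma+\log s)\ge\frac s{12}(\gamma+\log s)\ge-\frac{e^{-\gamma-1}}{12}=-\del$, with the minimum at $s=e^{-\gamma-1}$. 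It then remains to show that $h(s)$ strictly dominates the regularized logarithmic convolution, i.e.
\[
\int_0^s g'(u)\,\bigl(-\log(1-u/s)\bigr)\dd u<h(s).
\]

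\textbf{Main obstacle.} The last inequality is where I expect the real work. My first attempt would be to control $g'=(sh')'$ through monotonicity of an auxiliary digamma-type kernel, for instance $h'(u)\ge h'(s)$ for $u\le s$ combined with sign information on $g'$. I would then substitute $u=sv$ and compare with $\int_0^1-\log(1-v)\dd v=1$, splitting into small $s$ (Taylor expansion with $h(s)\approx s/12$) and large $s$ (where $h\to\frac12$ while $g$ decays exponentially). If the crude split leaves a gap, I would fall back on keeping the slack from the estimate of the $\gamma+\log s$ term away from $s=e^{-\gamma-1}$, and use it to absorb the part of the convolution not covered by $h$.
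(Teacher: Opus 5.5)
Your Step~1 is correct and gives exactly the paper's kernel. Your $h'$ is the paper's $h$, and your $g$ is its $b$. Integration by parts gives $\int_0^s g'(u)\log(1-u/s)\dd u=\int_0^s\frac{g(u)-g(s)}{s-u}\dd u$, which is the form used there. Your treatment of the $\gamma+\log s$ term via $h'\le 1/12$ is also the paper's.

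The gap is the step you call the main obstacle. The inequality $h(s)+\int_0^s g'(u)\log(1-u/s)\dd u>0$ is the whole content of the kernel bound, and you do not prove it. You only list strategies, and two of them rest on incorrect heuristics:
\begin{itemize}
\item $g(s)=sh'(s)$ does not decay exponentially. Since $h'(s)=s^{-2}+O(e^{-s})$, you have $g(s)\sim 1/s$.
\item $g'$ has no fixed sign, because $g$ increases from $g(0)=0$ and then decays. So ``sign information on $g'$'' is not available.
\end{itemize}
You also take the strict decrease of $h'$ for granted. It is needed both for $h'\le 1/12$ and for the missing positivity, and the paper proves it through $\sinh^3z>z^3\cosh z$.

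The missing idea is to absorb $h(s)=\int_0^s h'(u)\dd u$ into the convolution term before estimating anything:
\[
 h(s)+\int_0^s\frac{uh'(u)-sh'(s)}{s-u}\dd u
 =s\int_0^s\frac{h'(u)-h'(s)}{s-u}\dd u>0 .
\]
Positivity is immediate from the strict decrease of $h'$. No splitting into small and large $s$, and no borrowed slack, is needed.

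Equivalently, in your own variables, $g'=h'+uh''$ splits the quantity as
\[
 \int_0^s h'(u)\bigl(1+\log(1-u/s)\bigr)\dd u+\int_0^s uh''(u)\log(1-u/s)\dd u .
\]
The second integral is positive because $h''<0$ and $\log(1-u/s)<0$. The first is positive because the weight $1+\log(1-u/s)$ has total integral $0$ on $[0,s]$ (this is your comparison with $\int_0^1-\log(1-v)\dd v=1$). It also changes sign exactly once, from positive to negative, while $h'$ strictly decreases.

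With this step and a proof of the monotonicity of $h'$, your outline becomes a complete proof along the paper's lines. You would still need boundedness of $K_\alpha$ to justify differentiating under the integral sign for all $m$.
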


\begin{corollary}\label{cor:q}
The function $q$ in \eqref{eq:q} is strictly logarithmically completely monotone on $(0,\infty)$, and
\begin{equation}\label{eq:qbound}
 (-1)^m(\log q)^{(m)}(t)
 >\left(\gamma+\dfrac12-\dfrac{e^{-\gamma-1}}{12}\right)
    \dfrac{(m-1)!}{t^m},\qquad m\ge1,\quad t>0.
\end{equation}
\end{corollary}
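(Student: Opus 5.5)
The plan is to specialize Theorem~\ref{thm:main} to the single parameter value $\alpha=\gamma$. No new analysis is needed; the work is only checking the parameter condition and simplifying the constant.

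First, I will identify $q$ with a member of the family \eqref{eq:theta}. By definition, $\log f_\alpha(t)=-(\Theta(t)+\alpha)\log t$ with $\Theta(t)=t(\log t-\psi(t))$. Hence
\[
\log f_\gamma(t)=\bigl(t(\psi(t)-\log t)-\gamma\bigr)\log t,
\]
which is exactly $\log q(t)$ for the function in \eqref{eq:q}. So $q=f_\gamma$ on $(0,\infty)$, in agreement with \eqref{eq:oldfactor}.

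Next, I will check that $\alpha=\gamma$ lies in the admissible range $\alpha\ge\alzero$ of Theorem~\ref{thm:main}. From \eqref{eq:constants}, $\del=e^{-\gamma-1}/12$ satisfies $0<\del<1/12<1/2$, so $\alzero=-\tfrac12+\del<0$. Since $\gamma>0$, we get $\gamma>\alzero$, and in fact
\[
\gamma-\alzero=\gamma+\tfrac12-\tfrac{e^{-\gamma-1}}{12}>0 .
\]

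Applying \eqref{eq:mainbound} with $\alpha=\gamma$ then gives, for every $m\ge1$ and $t>0$,
\[
(-1)^m(\log q)^{(m)}(t)>(\gamma-\alzero)\frac{(m-1)!}{t^m}.
\]
Substituting the value of $\gamma-\alzero$ computed above yields exactly \eqref{eq:qbound}.

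Because the right-hand side of \eqref{eq:qbound} is strictly positive, every inequality in \eqref{eq:lcm} is strict for $f=q$. This is strict logarithmic complete monotonicity on the whole half-line. There is no real obstacle here; all the difficulty lives in Theorem~\ref{thm:main}. The only point needing care is the sign of $\gamma-\alzero$, which follows from the crude bound $\del<1/2$.
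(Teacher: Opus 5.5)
Your proposal is correct and is the paper's own argument: the paper proves the corollary by setting $\alpha=\gamma$ in Theorem~\ref{thm:main}. Your extra checks, that $q=f_\gamma$ and that $\gamma-\alzero=\gamma+\tfrac12-\tfrac{e^{-\gamma-1}}{12}>0$, are exactly what that specialization requires.
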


The main device is an explicit representation
\begin{equation}\label{eq:reprintro}
 - (\log f_\alpha)'(t)
   =\int_0^\infty e^{-tu}K_\alpha(u)\dd u,
\end{equation}
in which the parameter enters only as an additive constant in $K_\alpha$.
Section~2 establishes the required digamma-kernel identities.
Section~3 justifies the logarithmic convolution and derives \eqref{eq:reprintro} for every real $\alpha$.
Section~4 proves the lower bound and endpoint estimates for $K_\alpha$.
Section~5 completes the proof, derives consequences, and characterizes the exact admissible threshold variationally.
The sufficient bound $\alzero$ is not claimed to be optimal.

\section{The auxiliary digamma kernel}

Define
\begin{equation}\label{eq:h}
 h(u)=\dfrac1{u^2}-\dfrac1{4\sinh^2(u/2)},\qquad u>0.
\end{equation}
The following representation, also used in \cite[Eq.~(1.7)]{Bouali}, is the starting point of the proof.
We include its derivation and the elementary kernel estimates for completeness.

\begin{lemma}\label{lem:repr}
For every $t>0$,
\begin{equation}\label{eq:thetaH}
 \Theta(t)=\dfrac12+H(t),\qquad
 H(t)=\int_0^\infty h(u)e^{-tu}\dd u.
\end{equation}
Moreover,
\begin{equation}\label{eq:mass}
 \int_0^\infty h(u)\dd u=\dfrac12.
\end{equation}
\end{lemma}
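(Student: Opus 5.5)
We start from the classical Binet-type representation of the digamma function, valid for $t>0$:
\begin{equation*}
 \log t-\psi(t)=\int_0^\infty\Bigl(\dfrac1{1-e^{-u}}-\dfrac1u\Bigr)e^{-tu}\dd u .
\end{equation*}
This follows from Gauss's integral $\psi(t)=\int_0^\infty\bigl(e^{-u}/u-e^{-tu}/(1-e^{-u})\bigr)\dd u$ together with Frullani's formula $\log t=\int_0^\infty (e^{-u}-e^{-tu})u^{-1}\dd u$. Set $g(u)=\frac1{1-e^{-u}}-\frac1u$. This function is smooth on $[0,\infty)$ with $g(0^+)=\tfrac12$ and $g(u)\to1$ as $u\to\infty$. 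Multiplying by $t$, we write $te^{-tu}=-\frac{\mathrm d}{\mathrm du}e^{-tu}$ and integrate by parts:
\begin{equation*}
 \Theta(t)=t\int_0^\infty g(u)e^{-tu}\dd u
 =\bigl[-g(u)e^{-tu}\bigr]_0^\infty+\int_0^\infty g'(u)e^{-tu}\dd u
 =\dfrac12+\int_0^\infty g'(u)e^{-tu}\dd u .
\end{equation*}
The boundary term at $\infty$ vanishes because $g$ is bounded and $t>0$, and at $0$ it contributes $g(0^+)=\tfrac12$.

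It then remains to check that $g'=h$. Differentiating gives $g'(u)=\frac1{u^2}-\frac{e^{-u}}{(1-e^{-u})^2}$. The identity $\frac{e^{-u}}{(1-e^{-u})^2}=\frac{1}{(e^{u/2}-e^{-u/2})^2}=\frac1{4\sinh^2(u/2)}$ shows that $g'(u)=h(u)$, which is \eqref{eq:thetaH}. We also note that $h$ is bounded near $0$, since $h(0^+)=\tfrac1{12}$ by the expansion $\frac{1}{4\sinh^2(u/2)}=\frac1{u^2}-\frac1{12}+O(u^2)$. Moreover $h(u)=O(u^{-2})$ at infinity. Hence $h\in L^1(0,\infty)$, and all integrals above converge absolutely.

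For the mass identity \eqref{eq:mass}, we use $\int_0^\infty h(u)\dd u=\lim_{R\to\infty}g(R)-g(0^+)=1-\tfrac12=\tfrac12$. Equivalently, letting $t\to0^+$ in \eqref{eq:thetaH} by dominated convergence (legitimate since $h\in L^1$) gives $\tfrac12+\int_0^\infty h=\lim_{t\to0^+}\Theta(t)$. The right-hand side is $1$, because $t\psi(t)\to-1$ and $t\log t\to0$. Both routes agree.

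The only delicate point is justifying the starting Binet-type formula and the vanishing of the boundary terms; both are standard. If preferred, one can instead cite the known formula $\log t-\psi(t)=\frac1{2t}+\int_0^\infty\bigl(\frac1{e^u-1}-\frac1u+\frac12\bigr)e^{-tu}\dd u$ and integrate by parts in the same way.
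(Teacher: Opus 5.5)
Your proposal is correct and follows essentially the same route as the paper: the representation $\log t-\psi(t)=\int_0^\infty e^{-tu}g(u)\dd u$ (your $g$ is the paper's $F$), integration by parts with boundary contribution $g(0+)=\tfrac12$, the identification $g'=h$, and the mass identity $\int_0^\infty h=g(\infty)-g(0+)=\tfrac12$. Your derivation of the starting formula from Gauss's integral and Frullani's formula, and your cross-check of the mass identity via $t\to0^+$ with $\Theta(0+)=1$, are valid additions; the paper instead cites DLMF for the representation and uses only the endpoint-limit argument for the mass.
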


\begin{proof}
The standard digamma representation \cite[Eq.~(5.9.13)]{DLMF} gives
\begin{equation}\label{eq:digammaint}
 \log t-\psi(t)=\int_0^\infty e^{-tu}F(u)\dd u,\qquad
 F(u)=\dfrac1{1-e^{-u}}-\dfrac1u.
\end{equation}
At the origin,
\begin{equation}\label{eq:Fexp}
 F(u)=\dfrac12+\dfrac u{12}-\dfrac{u^3}{720}
          +\dfrac{u^5}{30240}+O(u^7).
\end{equation}
Hence $F(0+)=1/2$, while $F(u)\to1$ as $u\to\infty$.
Direct differentiation yields
\begin{equation}\label{eq:Fprime}
 F'(u)=\dfrac1{u^2}-\dfrac{e^{-u}}{(1-e^{-u})^2}=h(u).
\end{equation}
The function $F$ is bounded on the positive half-line.
Multiplying \eqref{eq:digammaint} by $t$ and integrating by parts therefore gives
\begin{align*}
 \Theta(t)
 &=\left[-e^{-tu}F(u)\right]_{u=0}^{u=\infty}
       +\int_0^\infty e^{-tu}F'(u)\dd u\\
 &=\dfrac12+\int_0^\infty e^{-tu}h(u)\dd u.
\end{align*}
Finally, integrating \eqref{eq:Fprime} over a finite interval and passing to the endpoints gives \eqref{eq:mass}.
Convergence follows as well from the expansions in the next lemma.
\end{proof}

\begin{lemma}\label{lem:h}
The function $h$ extends smoothly to $u=0$, is positive and strictly decreasing on $(0,\infty)$, and satisfies
\begin{equation}\label{eq:hbound}
 0<h(u)<\dfrac1{12},\qquad u>0.
\end{equation}
Its endpoint expansions are
\begin{align}
 h(u)&=\dfrac1{12}-\dfrac{u^2}{240}+\dfrac{u^4}{6048}+O(u^6)
      &&(u\to0+),\label{eq:hzero}\\
 h(u)&=\dfrac1{u^2}+O(e^{-u}),\qquad
 h'(u)=-\dfrac2{u^3}+O(e^{-u})
      &&(u\to\infty).\label{eq:hinfty}
\end{align}
\end{lemma}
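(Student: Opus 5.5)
We prove Lemma~\ref{lem:h} by reducing everything to statements about the even function $1/\sinh^2$. Using $4\sinh^2(u/2)=e^{u}-2+e^{-u}$, we have $h(u)=u^{-2}-\tfrac14\sinh^{-2}(u/2)$. The classical Laurent expansion
\[
 \dfrac{1}{4\sinh^2(u/2)}=\dfrac1{u^2}-\dfrac1{12}+\dfrac{u^2}{240}-\dfrac{u^4}{6048}+O(u^6)
\]
is obtained by differentiating the series $\tfrac12\coth(u/2)=u^{-1}+\sum_{k\ge1}B_{2k}u^{2k-1}/(2k)!$. The singular terms cancel. Hence $h$ extends to an even real-analytic function near $0$, which gives smoothness and \eqref{eq:hzero}, including $h(0)=1/12$. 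For \eqref{eq:hinfty} we note that $\tfrac14\sinh^{-2}(u/2)=e^{-u}(1-e^{-u})^{-2}=O(e^{-u})$. Its derivative, $-\tfrac14\cosh(u/2)\sinh^{-3}(u/2)$, is also $O(e^{-u})$. Subtracting these from $u^{-2}$ and $-2u^{-3}$ gives the stated asymptotics.

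Monotonicity is the main step. We use the partial-fraction expansion
\[
 \dfrac1{4\sinh^2(u/2)}=\sum_{k\in\mathbb Z}\dfrac1{(u+2\pi i k)^2},
\]
which comes from differentiating $\tfrac12\coth(u/2)=\sum_k (u+2\pi ik)^{-1}$, the sum taken symmetrically. Removing the $k=0$ term and pairing $\pm k$ gives
\[
 h(u)=-\sum_{k\ge1}\dfrac{2(u^2-4\pi^2k^2)}{(u^2+4\pi^2k^2)^2}
     =\sum_{k\ge1}\dfrac{2(4\pi^2k^2-u^2)}{(u^2+4\pi^2k^2)^2}.
\]
This form gives $h(u)>0$ only for $u$ below $2\pi$, so positivity needs a separate argument.

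Instead, we would prove monotonicity through the Laplace representation. From \eqref{eq:Fprime}, $h=F'$ with $F(u)=\dfrac1{1-e^{-u}}-\dfrac1u$. The claim is that $F$ is increasing and concave. Equivalently, we use the integral
\[
 h(u)=\int_0^\infty s\,e^{-us}\,\phi(s)\dd s,
\]
obtained from $u^{-2}=\int_0^\infty s e^{-us}\dd s$ and from writing $\tfrac14\sinh^{-2}(u/2)$ via the Fourier-type integral
\[
 \int_0^\infty \dfrac{x\sin(ux)}{\ldots}\dd x .
\]
Because this last route is messy, the cleaner route we would actually follow is direct. We set $x=u/2$, so that
\[
 h=\dfrac14\bigl(x^{-2}-\sinh^{-2}x\bigr).
\]
Then $h>0$ is exactly $\sinh x>x$. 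For $h'<0$ we compute
\[
 \dfrac{d}{dx}\bigl(x^{-2}-\sinh^{-2}x\bigr)=-2x^{-3}+2\cosh x\,\sinh^{-3}x .
\]
This is negative if and only if $x^3\cosh x<\sinh^3 x$ for $x>0$. This inequality is the expected obstacle.

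We would handle it by taking cube roots. Define $g(x)=\sinh x\,(\cosh x)^{-1/3}-x$, so the inequality reads $g(x)>0$. We have $g(0)=0$, and
\[
 g'(x)=\tfrac23\cosh^{2/3}x+\tfrac13\cosh^{-4/3}x-1 .
\]
By weighted AM--GM with weights $2/3$ and $1/3$,
\[
 \tfrac23 a+\tfrac13 b\ge a^{2/3}b^{1/3},
\]
with equality only when $a=b$. Taking $a=\cosh^{2/3}x$ and $b=\cosh^{-4/3}x$ gives
\[
 a^{2/3}b^{1/3}=\cosh^{4/9-4/9}x=1 .
\]
Hence $g'(x)>0$ for $x>0$, since $\cosh x\ne1$ there. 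Therefore $g>0$, which yields strict decrease of $h$.

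The bounds \eqref{eq:hbound} then follow at once. We have $h>0$, and $h$ is strictly decreasing from $h(0+)=1/12$. Finally, $h(u)\to0$ as $u\to\infty$ by \eqref{eq:hinfty}, which is consistent with these bounds.
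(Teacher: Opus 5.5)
Your proof is correct, and its skeleton matches the paper's. Positivity comes from $\sinh x>x$, and the same formula for $h'$ reduces strict decrease to $\sinh^3x>x^3\cosh x$ with $x=u/2$. The bound $h<1/12$ follows from strict decrease and $h(0)=1/12$, and the large-$u$ asymptotics are read off from the explicit expressions for $h$ and $h'$.

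You differ from the paper in two sub-steps:
\begin{enumerate}
\item At the origin you actually derive \eqref{eq:hzero} from the Bernoulli series of $\frac12\coth(u/2)$. This also gives real-analyticity of $h$ on $|u|<2\pi$. The paper only asserts the expansion at the removable singularity.
\item For the key inequality, the paper works with $R_0(z)=3\log(\sinh z/z)-\log\cosh z$. It needs three nested sign arguments: for $R_0'$, then $D'$, then $E'=z\sinh z$. Your cube-root function $g$ needs only one derivative. With $y=\cosh^{2/3}x$ one has $g'(x)=\frac23y+\frac13y^{-2}-1=(y-1)^2(2y+1)/(3y^2)$. This is your weighted AM--GM step made explicit, and strictness comes from $y>1$ for $x>0$.
\end{enumerate}
Your route is shorter. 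The paper's avoids fractional powers and stays with expressions polynomial in $z$, $\sinh z$, $\cosh z$.

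The exploratory paragraphs are unused and should be cut: the partial fractions, the unspecified density $\phi$, and the incomplete ``$\ldots$'' integral. Note also that a representation $h(u)=\int_0^\infty se^{-us}\phi(s)\,ds$ with $\phi\ge0$ cannot exist. It would make $h$ completely monotone, whereas \eqref{eq:hzero} gives $h''(0)=-1/120<0$.
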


\begin{proof}
The inequality $2\sinh(u/2)>u$ gives $h(u)>0$.
Expansion of the analytic expression at the removable singularity $u=0$ gives \eqref{eq:hzero} and $h(0)=1/12$.
Differentiating \eqref{eq:h} gives
\begin{equation}\label{eq:hprime}
 h'(u)=-\dfrac2{u^3}+\dfrac{\cosh(u/2)}{4\sinh^3(u/2)}.
\end{equation}
On writing $z=u/2$, the inequality $h'(u)<0$ is equivalent to
\begin{equation}\label{eq:sinhineq}
 \sinh^3 z>z^3\cosh z,\qquad z>0.
\end{equation}
To prove it, set
\[
 R_0(z)=3\log\left(\dfrac{\sinh z}{z}\right)-\log(\cosh z).
\]
Then $R_0(0+)=0$ and
\begin{equation}\label{eq:Rprime}
 R_0'(z)=\dfrac{D(z)}{z\sinh z\cosh z},\qquad
 D(z)=z(3+2\sinh^2 z)-3\sinh z\cosh z.
\end{equation}
A further differentiation gives
\[
 D'(z)=4\sinh z\,(z\cosh z-\sinh z).
\]
The function $E(z)=z\cosh z-\sinh z$ satisfies $E(0)=0$ and $E'(z)=z\sinh z>0$.
Consequently $D'(z)>0$, $D(z)>0$, and $R_0'(z)>0$ for $z>0$.
Thus $R_0(z)>0$, which proves \eqref{eq:sinhineq} and the strict decrease of $h$.
Together with $h(0)=1/12$, this proves \eqref{eq:hbound}.
Finally, the two assertions in \eqref{eq:hinfty} follow separately from the explicit formulas \eqref{eq:h} and \eqref{eq:hprime}; no differentiation of an unspecified remainder is required.
\end{proof}

In particular, the function
\begin{equation}\label{eq:b}
 b(u)=uh(u),\qquad b(0)=0,
\end{equation}
is continuously differentiable on $[0,\infty)$, and both $b$ and $b'$ are bounded.
Near zero, $b(u)=u/12+O(u^3)$; at infinity, $b(u)=O(u^{-1})$ and $b'(u)=O(u^{-2})$.
These estimates justify the convolution manipulations below.

\section{Construction of the Laplace kernel}

For a function $a$ with absolutely convergent transform, write
\[
 \Lap[a](t)=\int_0^\infty e^{-tu}a(u)\dd u,\qquad t>0.
\]
By \eqref{eq:thetaH},
\begin{equation}\label{eq:phi}
 \Ph(t):=-(\log f_\alpha)'(t)
  =\dfrac{\alpha+1/2}{t}+\dfrac{H(t)}t+H'(t)\log t.
\end{equation}
Let $B(t)=\Lap[b](t)$.
Differentiation under the integral defining $H$, justified by Lemma~\ref{lem:h}, gives $H'(t)=-B(t)$.
The only term in \eqref{eq:phi} requiring special treatment is $H'(t)\log t$.

\subsection{A regularized logarithmic convolution}
Set $\ell(u)=\log u$ for $u>0$ and define
\begin{equation}\label{eq:w}
 w(u)=(b*\ell)(u)=\int_0^u b(v)\log(u-v)\dd v
                   =\int_0^u b(u-s)\log s\dd s.
\end{equation}

\begin{lemma}\label{lem:convolution}
The function $w$ extends continuously to $0$ with $w(0)=0$, is continuously differentiable on $(0,\infty)$, and
\begin{equation}\label{eq:wprime}
 w'(u)=b(u)\log u+
       \int_0^u\dfrac{b(v)-b(u)}{u-v}\dd v.
\end{equation}
The integral has a removable endpoint singularity.
For every $t>0$, the following identities are valid with absolute convergence:
\begin{equation}\label{eq:convtransforms}
 \Lap[w](t)=B(t)\Lap[\ell](t),\qquad
 \Lap[w'](t)=t\Lap[w](t).
\end{equation}
\end{lemma}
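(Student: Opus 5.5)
We work throughout with the regularity of $b$ recorded after \eqref{eq:b}: $b\in C^1[0,\infty)$, $b(0)=0$, $b(u)=O(u^{-1})$ and $b'(u)=O(u^{-2})$ at infinity.

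\emph{Continuity at $0$.} For $u\le 1$ we have $|b(u-s)|\le Cu$ on $[0,u]$, so
\[
|w(u)|\le Cu\int_0^u|\log s|\dd s=O\bigl(u^2(1+|\log u|)\bigr),
\]
and $w(0+)=0$.

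\emph{Differentiability and \eqref{eq:wprime}.} The direct route of differentiating $\int_0^u b(u-s)\log s\dd s$ under the integral is awkward, because the kernel $\log s$ does not depend on $u$ but the variable upper limit sits at a logarithmic singularity. Instead we use the decomposition
\[
w(u)=\int_0^u\bigl(b(v)-b(u)\bigr)\log(u-v)\dd v+b(u)\int_0^u\log(u-v)\dd v,
\]
where the second integral equals $u\log u-u$.

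First, the product rule on the second term gives
\[
b'(u)(u\log u-u)+b(u)\log u .
\]
Second, for the first term the integrand $g(u,v)=(b(v)-b(u))\log(u-v)$ vanishes at $v=u$ (it is $O((u-v)|\log(u-v)|)$). Its $u$-derivative is
\[
-b'(u)\log(u-v)+\frac{b(v)-b(u)}{u-v},
\]
which is integrable in $v$ and locally uniformly dominated for $u$ in compact subsets of $(0,\infty)$, since $b\in C^1$. Differentiation under the integral, with the boundary contribution vanishing, therefore yields
\[
-b'(u)(u\log u-u)+\int_0^u\frac{b(v)-b(u)}{u-v}\dd v .
\]
Adding the two contributions, the $b'(u)(u\log u-u)$ terms cancel and we obtain \eqref{eq:wprime}. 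Continuity of $w'$ on $(0,\infty)$ follows from the same domination. The quotient in \eqref{eq:wprime} is bounded by $\sup|b'|$, so the endpoint singularity at $v=u$ is removable.

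This justification of differentiating the singular convolution is the main technical point. To be safe, one can alternatively prove the formula first for the truncated integrals $\int_0^{u-\varepsilon}$ and then let $\varepsilon\to0$, using uniform convergence of the derivatives on compact sets.

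\emph{Growth bounds.} We have $|w(u)|\le\int_0^u|b(v)|\,|\log(u-v)|\dd v$. Since $b(v)=O(1/(1+v))$, this gives $|w(u)|=O\bigl((1+\log u)^2\bigr)$. Similarly, $|w'(u)|\le|b(u)\log u|+\int_0^u|b(v)-b(u)|/(u-v)\dd v$, and splitting the last integral at $v=u-1$ (using the bound on $b'$ near $v=u$ and $|b(v)|+|b(u)|$ elsewhere) gives at most polylogarithmic growth. Near $0$ we have $w'(u)=O(u|\log u|)$.

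\emph{Laplace identities \eqref{eq:convtransforms}.} The first identity follows from Fubini--Tonelli applied to
\[
\iint e^{-tu}|b(v)|\,|\log(u-v)|\dd v\dd u=\Lap[|b|](t)\,\Lap[|\ell|](t)<\infty,
\]
followed by the substitution $u=v+s$. The second identity follows by integrating by parts on $[\varepsilon,R]$:
\[
\int_\varepsilon^R e^{-tu}w'(u)\dd u=\bigl[e^{-tu}w(u)\bigr]_\varepsilon^R+t\int_\varepsilon^R e^{-tu}w(u)\dd u .
\]
The boundary terms vanish as $\varepsilon\to0$ and $R\to\infty$ because $w(0+)=0$ and $w$ grows at most polylogarithmically. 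Absolute convergence of $\Lap[w']$ follows from the growth bound on $w'$.
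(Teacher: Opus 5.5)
Your argument is correct, but the differentiation step follows a different route from the paper's.

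\textbf{The paper's route.} The paper differentiates $w(u)=\int_0^u b(u-s)\log s\dd s$ directly. In that variable the logarithmic singularity sits at the \emph{fixed} endpoint $s=0$, and the $u$-derivative $b'(u-s)\log s$ is dominated by $\sup|b'|\,|\log s|$. The upper-limit term is $b(0)\log u=0$. This gives $w'(u)=\int_0^u b'(v)\log(u-v)\dd v$, and a cutoff integration by parts against $b(v)-b(u)$ then yields \eqref{eq:wprime}. So your reason for avoiding this route is a misreading: the singularity is not at the moving upper limit, which is exactly why this is the easy route.

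\textbf{Your route.} Your subtract-and-add decomposition uses $\int_0^u\log(u-v)\dd v=u\log u-u$, and the $b'(u)(u\log u-u)$ terms cancel. This reaches \eqref{eq:wprime} without an integration by parts, which is neat. However, in the $v$-variable the singularity of $\partial_u g=-b'(u)\log(u-v)+\cdots$ moves with $u$. Hence there is no $u$-independent integrable majorant, and your claim of local uniform domination for differentiating under the integral is not valid as stated.

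\textbf{The repair.} Your truncation fallback does close this gap and should be the argument of record. On $0\le v\le u-\varepsilon$ the classical Leibniz rule applies. The boundary term $(b(u-\varepsilon)-b(u))\log\varepsilon$ and the omitted pieces over $[u-\varepsilon,u]$ are $O(\varepsilon|\log\varepsilon|)$ uniformly on compact $u$-sets, so the truncated functions and their derivatives converge uniformly. Continuity of $w'$ is unaffected, since the difference quotient in \eqref{eq:wprime} is bounded and jointly continuous.

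\textbf{The remaining steps.} These agree with the paper in substance. Your growth bounds exploit the decay $b=O(u^{-1})$ and $b'=O(u^{-2})$ to get polylogarithmic growth of $w$ and decay of $w'$. This is sharper than the paper's $O(u(1+\log u))$, which uses only boundedness, but either suffices for the two Laplace identities.
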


\begin{proof}
Since $b(0)=0$ and $b'$ is bounded, the second form of \eqref{eq:w} can be differentiated on compact subintervals of $(0,\infty)$, giving
\begin{equation}\label{eq:wprimealt}
 w'(u)=\int_0^u b'(u-s)\log s\dd s
       =\int_0^u b'(v)\log(u-v)\dd v.
\end{equation}
For clarity, one may first restrict to $s\ge\varepsilon$.
The omitted integrals are bounded uniformly on compact $u$-intervals by a constant times $\int_0^\varepsilon|\log s|\dd s$, which tends to zero.
The upper-end contribution in the Leibniz rule vanishes because $b(0)=0$.

For $0<\varepsilon<u$, integration by parts yields
\begin{align*}
 \int_0^{u-\varepsilon}b'(v)\log(u-v)\dd v
 &=\bigl(b(u-\varepsilon)-b(u)\bigr)\log\varepsilon
    +b(u)\log u\\
 &\quad+\int_0^{u-\varepsilon}\dfrac{b(v)-b(u)}{u-v}\dd v.
\end{align*}
The first term tends to zero, since it is $O(\varepsilon|\log\varepsilon|)$.
The quotient in the last integral tends to $-b'(u)$ as $v\to u-$.
Passing to the limit proves \eqref{eq:wprime}.

Near the origin, $b(u)=O(u)$ and \eqref{eq:w} imply
\begin{equation}\label{eq:wzero}
 w(u)=O\bigl(u^2(1+|\log u|)\bigr),\qquad
 w'(u)=O\bigl(u(1+|\log u|)\bigr).
\end{equation}
The first estimate gives $w(0)=0$.
The boundedness of $b$ and $b'$ also gives, as $u\to\infty$,
\begin{equation}\label{eq:wlarge}
 |w(u)|+|w'(u)|=O\bigl(u(1+\log u)\bigr).
\end{equation}
Thus $w,w'$ have absolutely convergent Laplace transforms and $e^{-tu}w(u)\to0$ at infinity for every $t>0$.

The use of the convolution theorem is justified explicitly by
\begin{align*}
 &\int_0^\infty\int_0^\infty
 e^{-t(v+s)}|b(v)\log s|\dd s\dd v\\
 &\hspace{15mm}=
 \left(\int_0^\infty e^{-tv}|b(v)|\dd v\right)
 \left(\int_0^\infty e^{-ts}|\log s|\dd s\right)<\infty.
\end{align*}
Fubini's theorem therefore proves the first identity in \eqref{eq:convtransforms}.
Integration by parts, using \eqref{eq:wzero} and \eqref{eq:wlarge}, proves the second.
\end{proof}

\subsection{The explicit density}
Differentiating the gamma integral
\[
 \int_0^\infty e^{-tu}u^{a-1}\dd u=\dfrac{\Gamma(a)}{t^a}
\]
with respect to $a$ at $a=1$ gives
\begin{equation}\label{eq:logtransform}
 G(t):=\Lap[\ell](t)=-\dfrac{\gamma+\log t}{t}.
\end{equation}
Differentiation is justified by an integrable majorant for $a$ in a compact neighborhood of $1$.
Since $H'=-B$ and $\log t=-tG(t)-\gamma$, Lemma~\ref{lem:convolution} yields
\begin{equation}\label{eq:cross}
 H'(t)\log t=tB(t)G(t)+\gamma B(t)
               =\Lap[w'+\gamma b](t).
\end{equation}

\begin{proposition}\label{prop:kernel}
For every $\alpha\in\R$ and every $t>0$,
\begin{equation}\label{eq:kernelrepr}
 \Ph(t)=\int_0^\infty e^{-tu}K_\alpha(u)\dd u,
\end{equation}
where
\begin{equation}\label{eq:kernel}
 K_\alpha(u)=\alpha+\dfrac12
       +uh(u)(\gamma+\log u)
       +u\int_0^u\dfrac{h(v)-h(u)}{u-v}\dd v.
\end{equation}
The integral in \eqref{eq:kernel} is an ordinary convergent integral, with its integrand extended continuously at $v=u$.
\end{proposition}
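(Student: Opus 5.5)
We assemble the representation term by term from \eqref{eq:phi}, writing each of the three summands as a Laplace transform of an explicit function and then adding.

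\emph{First two terms.} The first summand is handled by $\Lap[1](t)=1/t$, which gives the constant $\alpha+1/2$.

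For the second summand, recall $H=\Lap[h]$ with $h$ bounded and integrable by Lemma~\ref{lem:h}. Division by $t$ corresponds to integration, so
\[
 \dfrac{H(t)}{t}=\Lap\Bigl[\textstyle\int_0^u h(v)\dd v\Bigr](t).
\]
This follows by Fubini, or by integration by parts using that $\int_0^u h$ is bounded.

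\emph{Third summand.} By \eqref{eq:cross}, $H'(t)\log t=\Lap[w'+\gamma b](t)$. Substituting \eqref{eq:wprime} with $b(u)=uh(u)$ gives
\[
 w'(u)+\gamma b(u)=uh(u)(\gamma+\log u)+\int_0^u\dfrac{vh(v)-uh(u)}{u-v}\dd v.
\]

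\emph{Assembling the kernel.} The key algebraic step is to rewrite the last integrand. Since
\[
 vh(v)-uh(u)=u\bigl(h(v)-h(u)\bigr)-(u-v)h(v),
\]
we have
\[
 \int_0^u\dfrac{vh(v)-uh(u)}{u-v}\dd v=u\int_0^u\dfrac{h(v)-h(u)}{u-v}\dd v-\int_0^u h(v)\dd v.
\]
The term $-\int_0^u h$ cancels exactly the density found for $H(t)/t$. Summing the three densities then yields $K_\alpha$ as in \eqref{eq:kernel}, and linearity of $\Lap$ gives \eqref{eq:kernelrepr}.

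\emph{Convergence.} Three checks remain.

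1. The quotient $(h(v)-h(u))/(u-v)$ extends continuously to $-h'(u)$ at $v=u$, because $h$ is smooth on $[0,\infty)$. So the integral is an ordinary one. The splitting above is legitimate because each of the two pieces converges separately: $h$ is bounded, and the quotient is continuous on $[0,u]$.

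2. Each density has an absolutely convergent transform for every $t>0$. The constant is trivial. The function $\int_0^u h$ is bounded by $1/2$. The term $w'+\gamma b$ is controlled by \eqref{eq:wzero} and \eqref{eq:wlarge}, which Lemma~\ref{lem:convolution} already supplies. Hence the sum is legitimate, and $K_\alpha$ is locally integrable with at most polynomial-logarithmic growth.

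3. The only point needing care is rigor in the identity $H(t)/t=\Lap[\int_0^u h]$, which is a routine Fubini argument with the majorant $e^{-tu}h(v)$ on $0<v<u$. I expect no real obstacle there.

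The main risk is bookkeeping: making sure the sign conventions in $H'=-B$ and \eqref{eq:cross} produce $+H'\log t$ exactly as it appears in \eqref{eq:phi}. I would verify this by checking the small-$t$ or large-$t$ behaviour of both sides.
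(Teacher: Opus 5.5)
Your proposal is correct and follows essentially the same route as the paper: Tonelli for $H(t)/t=\Lap\bigl[\int_0^u h(v)\dd v\bigr](t)$, then \eqref{eq:cross} with \eqref{eq:wprime} for $H'(t)\log t$, and the identity $h(v)+\frac{vh(v)-uh(u)}{u-v}=u\,\frac{h(v)-h(u)}{u-v}$ to merge the two remaining integrals. The paper combines the integrands directly, whereas you split off $-\int_0^u h(v)\dd v$ and cancel it against the $H(t)/t$ density; since both pieces converge separately, this is the same computation.
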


\begin{proof}
Tonelli's theorem gives
\begin{equation}\label{eq:Hbyt}
 \dfrac{H(t)}t
 =\Lap\left[u\longmapsto\int_0^u h(v)\dd v\right](t).
\end{equation}
Combining \eqref{eq:phi}, \eqref{eq:cross}, \eqref{eq:Hbyt}, and \eqref{eq:wprime}, the initial expression for the density is
\begin{align*}
 K_\alpha(u)
 &=\alpha+\dfrac12+b(u)(\gamma+\log u)\\
 &\quad+\int_0^u h(v)\dd v
       +\int_0^u\dfrac{b(v)-b(u)}{u-v}\dd v.
\end{align*}
Using $b(v)=vh(v)$, the two last integrands combine to
\[
 h(v)+\dfrac{vh(v)-uh(u)}{u-v}
     =u\dfrac{h(v)-h(u)}{u-v}.
\]
This proves \eqref{eq:kernel} and \eqref{eq:kernelrepr}.
The convergence assertions follow from Lemma~\ref{lem:convolution} and the local smoothness of $h$.
\end{proof}

\section{Positivity and endpoint behavior of the kernel}

Write
\begin{equation}\label{eq:J}
 J(u)=u\int_0^u\dfrac{h(v)-h(u)}{u-v}\dd v.
\end{equation}
By the strict decrease of $h$,
\begin{equation}\label{eq:Jpositive}
 J(u)>0,\qquad u>0.
\end{equation}
This term retains its sign independently of the parameter and independently of the sign of $\log u+\gamma$.

\begin{proposition}\label{prop:positive}
For every $\alpha\in\R$ and $u>0$,
\begin{equation}\label{eq:Klower}
 K_\alpha(u)>\alpha+\dfrac12-\del=\alpha-\alzero.
\end{equation}
In particular, $K_\alpha$ is strictly positive on $(0,\infty)$ whenever $\alpha\ge\alzero$.
\end{proposition}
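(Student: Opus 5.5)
The plan is to split $K_\alpha$ as in \eqref{eq:kernel} into the constant $\alpha+\frac12$, the term $J(u)$ from \eqref{eq:J}, and the middle term
\[
 M(u)=uh(u)(\gamma+\log u).
\]
By \eqref{eq:Jpositive}, $J(u)>0$ for every $u>0$. It will therefore suffice to show that $M(u)\ge-\del$ for all $u>0$. Adding the strict inequality for $J$ then gives $K_\alpha(u)>\alpha+\frac12-\del=\alpha-\alzero$. The final assertion follows at once, since the right-hand side is $\ge0$ when $\alpha\ge\alzero$.

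To bound $M$, I split according to the sign of $\gamma+\log u$.

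\emph{Case $u\ge e^{-\gamma}$.} Here $\gamma+\log u\ge0$ and $h(u)>0$ by Lemma~\ref{lem:h}. Hence $M(u)\ge0>-\del$.

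\emph{Case $0<u<e^{-\gamma}$.} Here the factor $u(\gamma+\log u)$ is negative. By \eqref{eq:hbound}, $0<h(u)<\frac1{12}$, so
\[
 M(u)=h(u)\cdot u(\gamma+\log u)>\tfrac1{12}\,u(\gamma+\log u).
\]
It remains to minimise $g(u)=u(\gamma+\log u)$ over $u>0$. Its derivative is $g'(u)=\gamma+1+\log u$, which vanishes only at $u=e^{-\gamma-1}$. Since $g'$ is increasing, this critical point is the global minimum, with value
\[
 g(e^{-\gamma-1})=e^{-\gamma-1}(\gamma-\gamma-1)=-e^{-\gamma-1}.
\]
Hence $M(u)>-e^{-\gamma-1}/12=-\del$ in this case.

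In both cases $M(u)\ge-\del$, and strictness of the final inequality comes from $J(u)>0$. I do not expect a genuine obstacle. The only delicate point is the direction of the inequality when replacing $h(u)$ by its upper bound $\frac1{12}$: this substitution is legitimate only where $\gamma+\log u<0$, which is exactly why the case split is needed. The constant $\del$ arises precisely from the minimum of $u(\gamma+\log u)$, which confirms that this is the intended route.
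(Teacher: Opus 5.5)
Your proposal is correct and follows the paper's proof essentially step for step. You split at $u=e^{-\gamma}$, drop the strictly positive term $J$, and replace $h(u)$ by $1/12$ only where $u(\gamma+\log u)<0$. The constant $\del$ then comes from the global minimum $-e^{-\gamma-1}$ of $u(\gamma+\log u)$ at $u=e^{-\gamma-1}$.
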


\begin{proof}
If $u\ge e^{-\gamma}$, then $uh(u)(\gamma+\log u)\ge0$, and \eqref{eq:Jpositive} gives
\[
 K_\alpha(u)>\alpha+\dfrac12>\alpha+\dfrac12-\del.
\]
If $0<u<e^{-\gamma}$, multiplication of $h(u)<1/12$ by the negative number $u(\gamma+\log u)$ gives
\[
 uh(u)(\gamma+\log u)>\dfrac u{12}(\gamma+\log u).
\]
The function $u\mapsto u(\gamma+\log u)$ has its global minimum $-e^{-\gamma-1}$ at $u=e^{-\gamma-1}$.
Dropping the strictly positive term $J(u)$ therefore yields
\[
 K_\alpha(u)>\alpha+\dfrac12+\dfrac u{12}(\gamma+\log u)
              \ge\alpha+\dfrac12-\del.
\]
This proves \eqref{eq:Klower} in both cases.
\end{proof}

\begin{proposition}\label{prop:growth}
For every fixed $\alpha\in\R$, the kernel $K_\alpha$ is bounded and continuous on $(0,\infty)$, extends continuously to $0$, and satisfies
\begin{align}
 K_\alpha(u)
 &=\alpha+\dfrac12+\dfrac u{12}(\gamma+\log u)
       +O(u^3|\log u|), &&u\to0+,\label{eq:Kzero}\\
 \lim_{u\to\infty}K_\alpha(u)&=\alpha+1.\label{eq:Klimit}
\end{align}
Consequently, for every $t>0$ and every integer $n\ge0$,
\begin{equation}\label{eq:integrability}
 \int_0^\infty u^n e^{-tu}|K_\alpha(u)|\dd u<\infty.
\end{equation}
\end{proposition}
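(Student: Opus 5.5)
The plan is to treat the three pieces of $K_\alpha$ in \eqref{eq:kernel} separately: the constant $\alpha+\tfrac12$, the term $b(u)(\gamma+\log u)$ with $b=uh$, and the term $J(u)$ from \eqref{eq:J}. Continuity on $(0,\infty)$ is immediate for the first two pieces. For $J$, I will substitute $v=u\theta$:
\[
J(u)=u\int_0^1\frac{h(u\theta)-h(u)}{1-\theta}\dd\theta .
\]
The quotient is bounded by $u\sup_{[0,u]}|h'|$, since the mean value theorem gives $|h(u\theta)-h(u)|\le (1-\theta)u\sup|h'|$. Dominated convergence on compact $u$-sets then gives continuity on $[0,\infty)$.

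The small-$u$ expansion \eqref{eq:Kzero} follows from the same bound. By \eqref{eq:hzero}, $h'(w)=O(w)$ near $0$, so $\sup_{[0,u]}|h'|=O(u)$ and hence $J(u)=O(u^3)$. Also $h(u)=\tfrac1{12}+O(u^2)$, so $uh(u)(\gamma+\log u)=\tfrac u{12}(\gamma+\log u)+O(u^3|\log u|)$. Adding these gives \eqref{eq:Kzero}, and in particular $K_\alpha(0+)=\alpha+\tfrac12$.

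For \eqref{eq:Klimit} I will use \eqref{eq:hinfty}. The second term satisfies $uh(u)(\gamma+\log u)=(\gamma+\log u)/u+O(ue^{-u}\log u)\to0$. For $J$, I split
\[
J(u)=u\int_0^u\frac{h(v)}{u-v}\dd v\;-\;\text{correction},
\]
which is awkward because each half diverges logarithmically. I will therefore split at $v=u/2$ instead.

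On $[0,u/2]$ we have $\frac{u}{u-v}=1+\frac{v}{u-v}$. So this part equals
\[
\int_0^{u/2}h(v)\dd v+\int_0^{u/2}\frac{v\,h(v)}{u-v}\dd v-u\,h(u)\log 2 .
\]
The first integral tends to $\tfrac12$ by \eqref{eq:mass}. The last term is $O(1/u)$. The middle term is at most $\frac2u\int_0^{u/2}vh(v)\dd v=O(\log u/u)$, since $vh(v)=O(1/v)$.

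On $[u/2,u]$, the mean value theorem and $h'(w)=O(w^{-3})$ give an integrand $O(u^{-3})$ on an interval of length $u/2$. This part is therefore $u\cdot O(u^{-2})=O(u^{-1})$.

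Hence $J(u)\to\tfrac12$ and $K_\alpha(u)\to\alpha+\tfrac12+\tfrac12=\alpha+1$. This matches the large-$t$ behaviour of $\Phi_\alpha$ through \eqref{eq:phi}, which is a useful consistency check. The main obstacle is this bookkeeping for $J$ at infinity, where the cancellation has to be handled by the split rather than termwise.

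Boundedness then follows: $K_\alpha$ is continuous on $[0,\infty)$ with a finite limit at infinity. Finally, \eqref{eq:integrability} holds because $\int_0^\infty u^ne^{-tu}\dd u=n!/t^{n+1}<\infty$.
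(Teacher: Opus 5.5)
Your proposal is correct and follows the paper's proof essentially step for step: the mean-value bound giving $J(u)=O(u^3)$ near zero, the split of $J$ at $v=u/2$ using the mass identity \eqref{eq:mass} and the $uh(u)\log 2$ correction on $[0,u/2]$, and the mean-value bound with $h'=O(u^{-3})$ on $[u/2,u]$. Your rewriting $u/(u-v)=1+v/(u-v)$ only replaces the paper's dominated-convergence step by an explicit $O(\log u/u)$ rate. One small slip in your side remark: the limit $\alpha+1$ at $u=\infty$ matches $\Phi_\alpha(t)\sim(\alpha+1)/t$ as $t\to0+$, not as $t\to\infty$, where $\Phi_\alpha(t)\sim(\alpha+\tfrac12)/t$ instead reflects $K_\alpha(0+)$.
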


\begin{proof}
Continuity follows from the removable singularity in the integrand of \eqref{eq:J}; on compact intervals, its continuous extension is uniformly bounded.
Near zero, the mean value theorem and \eqref{eq:hzero} imply
\[
 \sup_{0\le v<u}\left|\dfrac{h(v)-h(u)}{u-v}\right|=O(u),
 \qquad J(u)=O(u^3).
\]
Combining this with $uh(u)=u/12+O(u^3)$ proves \eqref{eq:Kzero}.

For large $u$, split $J(u)=J_1(u)+J_2(u)$ at $v=u/2$.
On the first interval,
\begin{equation}\label{eq:J1}
 J_1(u)=\int_0^{u/2}\dfrac{u}{u-v}h(v)\dd v-uh(u)\log2.
\end{equation}
In the first integral, the factor $u/(u-v)$ is at most $2$ and tends to $1$ for each fixed $v$.
Since $h$ is integrable, dominated convergence and \eqref{eq:mass} show that this integral tends to $1/2$.
The second term tends to zero by \eqref{eq:hinfty}.
Thus $J_1(u)\to1/2$.
On the remaining interval, the mean value theorem gives
\begin{equation}\label{eq:J2}
 0\le J_2(u)
 \le\dfrac{u^2}{2}\sup_{u/2\le s\le u}|h'(s)|=O(u^{-1}).
\end{equation}
Therefore $J(u)\to1/2$.
Also,
\[
 uh(u)(\gamma+\log u)=O\left(\dfrac{\log u}{u}\right)\longrightarrow0.
\]
Substituting in \eqref{eq:kernel} proves \eqref{eq:Klimit}.
Continuity and the finite limits at both endpoints give boundedness, and the exponential factor then proves \eqref{eq:integrability}.
\end{proof}

\section{Proof of the main result and consequences}

\subsection{Alternating logarithmic derivatives}
The representation for all real parameters will also be useful in discussing necessity.

\begin{proposition}\label{prop:allorders}
For every $\alpha\in\R$, every integer $m\ge1$, and every $t>0$,
\begin{equation}\label{eq:allorders}
 (-1)^m(\log f_\alpha)^{(m)}(t)
   =\int_0^\infty u^{m-1}e^{-tu}K_\alpha(u)\dd u.
\end{equation}
\end{proposition}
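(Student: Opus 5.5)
We differentiate the Laplace representation of Proposition~\ref{prop:kernel} under the integral sign, $m-1$ times in $t$. By definition $\Ph=-(\log f_\alpha)'$, so for $m\ge1$ the left side of \eqref{eq:allorders} can be rewritten as
\[
 (-1)^m(\log f_\alpha)^{(m)}(t)=(-1)^{m-1}\Ph^{(m-1)}(t).
\]
The claim is therefore equivalent to
\[
 (-1)^{m-1}\Ph^{(m-1)}(t)=\int_0^\infty u^{m-1}e^{-tu}K_\alpha(u)\dd u .
\]
For $m=1$ this is exactly \eqref{eq:kernelrepr}.

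For the general case we argue by induction on $n=m-1\ge0$, establishing that $\Ph^{(n)}(t)=\int_0^\infty(-u)^ne^{-tu}K_\alpha(u)\dd u$. The induction step is a single differentiation under the integral sign. Fix $t_0>0$ and restrict to $t\in[t_0/2,\infty)$. There the derivative of the integrand in $t$ is bounded by
\[
 u^{n+1}e^{-t_0u/2}|K_\alpha(u)|.
\]
This majorant is independent of $t$. It is integrable by \eqref{eq:integrability} applied at $t_0/2$, or more directly because $K_\alpha$ is bounded by Proposition~\ref{prop:growth}. The standard dominated-convergence criterion for differentiating parameter integrals (via the mean value theorem) then justifies passing the derivative inside. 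This yields the formula for $n+1$ at every $t>t_0/2$, and in particular at $t_0$. Since $t_0$ is arbitrary, the induction closes.

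Multiplying by $(-1)^{n}=(-1)^{m-1}$ gives \eqref{eq:allorders}. The only delicate point is the integrable majorant, which must be uniform in $t$ near $t_0$. It is supplied by the boundedness of $K_\alpha$ from Proposition~\ref{prop:growth}, so no real obstacle remains. It is also worth noting that $\Ph$ is smooth on $(0,\infty)$, since $f_\alpha$ is built from smooth functions. Hence the derivatives computed from the representation coincide with the genuine derivatives of $\log f_\alpha$.
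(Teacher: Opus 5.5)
Your proposal is correct and follows the paper's proof: you differentiate the representation \eqref{eq:kernelrepr} under the integral sign, using the $t$-uniform majorant $u^{n}e^{-au}|K_\alpha(u)|$ for $t\ge a>0$, whose integrability comes from Proposition~\ref{prop:growth}. The only cosmetic differences are that you phrase it as an induction and work on a half-line $[t_0/2,\infty)$, while the paper uses compact intervals $[a,b]$.
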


\begin{proof}
On a compact interval $t\in[a,b]\subset(0,\infty)$, the integrand after $n$ differentiations in \eqref{eq:kernelrepr} is bounded in absolute value by
\[
 u^n e^{-au}|K_\alpha(u)|,
\]
which is integrable by Proposition~\ref{prop:growth}.
Differentiation under the integral is therefore justified at every order.
Since $\Ph=-(\log f_\alpha)'$, setting $n=m-1$ gives \eqref{eq:allorders}.
\end{proof}

\begin{proof}[Proof of Theorem~\ref{thm:main} and Corollary~\ref{cor:q}]
For $\alpha\ge\alzero$, Propositions~\ref{prop:positive} and \ref{prop:allorders} give
\begin{align*}
 (-1)^m(\log f_\alpha)^{(m)}(t)
 &>(\alpha-\alzero)\int_0^\infty u^{m-1}e^{-tu}\dd u\\
 &=(\alpha-\alzero)\dfrac{(m-1)!}{t^m}.
\end{align*}
The difference between the two sides is the integral of a strictly positive continuous function on $(0,\infty)$, so the inequality is strict even when $\alpha=\alzero$.
This proves Theorem~\ref{thm:main}.
Taking $\alpha=\gamma$ proves Corollary~\ref{cor:q}.
\end{proof}

\subsection{Monotonicity, positive powers, and endpoint asymptotics}

\begin{corollary}\label{cor:consequences}
For every $\alpha\ge\alzero$, the function $f_\alpha$ is strictly decreasing and strictly log-convex on $(0,\infty)$.
Every positive power $f_\alpha^r$, $r>0$, is strictly completely monotone.
Moreover,
\begin{equation}\label{eq:asymptoticf}
 f_\alpha(t)\sim t^{-(\alpha+1)}\quad(t\to0+),\qquad
 f_\alpha(t)\sim t^{-(\alpha+1/2)}\quad(t\to\infty).
\end{equation}
In particular, $f_\alpha(1)=1$ and $f_\alpha$ is a bijection of $(0,\infty)$ onto $(0,\infty)$.
\end{corollary}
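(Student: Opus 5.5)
The four assertions (monotonicity, log-convexity, complete monotonicity of positive powers, and the endpoint asymptotics) all come from Theorem~\ref{thm:main} together with the explicit formulas \eqref{eq:theta} and \eqref{eq:thetaH}. I take them in turn.

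\emph{Monotonicity and log-convexity.} Theorem~\ref{thm:main} with $m=1$ gives $(\log f_\alpha)'(t)<0$, so $f_\alpha$ is strictly decreasing. With $m=2$ it gives $(\log f_\alpha)''(t)>0$, which is strict log-convexity.

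\emph{Complete monotonicity of $f_\alpha^r$.} Write $f_\alpha^r=\exp(r\log f_\alpha)$ and set $g=-r(\log f_\alpha)'$. By Theorem~\ref{thm:main}, $(-1)^n g^{(n)}(t)>0$ for all $n\ge0$, so $g$ is strictly completely monotone. Then $F=f_\alpha^r$ satisfies $F'=-gF$. By induction with the Leibniz rule,
\[
(-1)^{n+1}F^{(n+1)}=\sum_{k=0}^{n}\binom nk\,\bigl[(-1)^k g^{(k)}\bigr]\bigl[(-1)^{n-k}F^{(n-k)}\bigr].
\]
Every term on the right is positive, and $F>0$ starts the induction. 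This gives strict positivity at every order. Alternatively one can invoke the standard fact that an exponential of minus a Bernstein function is completely monotone, but the induction is self-contained and already strict.

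\emph{Asymptotics.} From \eqref{eq:theta}, $\log f_\alpha(t)=-(\Theta(t)+\alpha)\log t$. By \eqref{eq:thetaH}, $\Theta=\tfrac12+H$.
\begin{itemize}
\item[(i)] As $t\to\infty$: by Lemma~\ref{lem:h} and the expansion \eqref{eq:hzero}, a Watson-type estimate gives $H(t)=\tfrac1{12t}+O(t^{-2})$. Hence $H(t)\log t\to0$, and $f_\alpha(t)\sim t^{-(\alpha+1/2)}$.
\item[(ii)] As $t\to0+$: I need $\Theta(t)=1+o(1/|\log t|)$. From $\psi(t)=-1/t-\gamma+O(t)$ we get $t\psi(t)=-1-\gamma t+O(t^2)$. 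Then $\Theta(t)=t\log t+1+\gamma t+O(t^2)$, and so $(\Theta(t)-1)\log t=O(t\log^2 t)\to0$. This yields $f_\alpha(t)\sim t^{-(\alpha+1)}$.
\end{itemize}
This small-$t$ step is the only place where one must make sure the correction is $o(1/|\log t|)$ and not merely $o(1)$. The explicit $t\log t$ term settles it.

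\emph{Value at $1$ and bijectivity.} Since $\log 1=0$, we have $f_\alpha(1)=1$. For bijectivity, $f_\alpha$ is continuous and strictly decreasing. For $\alpha\ge\alzero$, both exponents $\alpha+1$ and $\alpha+\tfrac12$ are positive, because $\alzero>-\tfrac12$. So by the asymptotics $f_\alpha\to\infty$ as $t\to0+$ and $f_\alpha\to0$ as $t\to\infty$. The intermediate value theorem then gives a bijection of $(0,\infty)$ onto itself.
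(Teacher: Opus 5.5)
Your proposal is correct and follows the paper's proof essentially step by step: the cases $m=1,2$ of Theorem~\ref{thm:main}, the Leibniz-rule induction for $f_\alpha^r$, the small-$t$ expansion $\Theta(t)=1+t\log t+\gamma t+O(t^2)$, and continuity plus the endpoint limits for bijectivity. The only deviation is at $t\to\infty$, where you use the Laplace representation of $H$ rather than the asymptotic expansion of $\psi$; the bound $0<H(t)<1/(12t)$, immediate from \eqref{eq:hbound}, already gives $H(t)\log t\to0$ without Watson's lemma.
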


\begin{proof}
The first assertion follows from the cases $m=1,2$ of Theorem~\ref{thm:main}.
For the second, let $F=f_\alpha^r$ and $A=-(\log F)'=r\Ph$.
The function $A$ is strictly completely monotone, and $F'=-AF$.
Leibniz' rule gives
\begin{equation}\label{eq:powers}
 (-1)^{n+1}F^{(n+1)}
 =\sum_{k=0}^n\binom nk
     \bigl((-1)^kA^{(k)}\bigr)
     \bigl((-1)^{n-k}F^{(n-k)}\bigr).
\end{equation}
Starting with $F>0$, induction proves strict complete monotonicity of $F$.

The standard expansions \cite[Eqs.~(5.7.6) and (5.11.2)]{DLMF} are
\begin{align*}
 \psi(t)&=-\dfrac1t-\gamma+O(t), &&t\to0+,\\
 \psi(t)&=\log t-\dfrac1{2t}-\dfrac1{12t^2}+O(t^{-4}),
       &&t\to\infty.
\end{align*}
Thus
\[
 \Theta(t)=1+t\log t+\gamma t+O(t^2)\quad(t\to0+),
\]
and
\[
 \Theta(t)=\dfrac12+\dfrac1{12t}+O(t^{-3})\quad(t\to\infty).
\]
After multiplication by $\log t$, these identities show that
\begin{align*}
 \log\bigl(t^{\alpha+1}f_\alpha(t)\bigr)&\longrightarrow0
     &&(t\to0+),\\
 \log\bigl(t^{\alpha+1/2}f_\alpha(t)\bigr)&\longrightarrow0
     &&(t\to\infty),
\end{align*}
which proves \eqref{eq:asymptoticf}.
Since $\alpha\ge\alzero>-1/2$, the endpoint limits of $f_\alpha$ are respectively $\infty$ and $0$.
The remaining assertions follow from continuity, strict decrease, and the definition at $t=1$.
\end{proof}

\subsection{A variational characterization of the exact threshold}
The explicit sufficient constant $\alzero$ comes from dropping the positive term $J$ and replacing $h$ by $1/12$ where the logarithmic factor is negative.
The full kernel gives an exact, but implicit, description of the admissible parameters.
Define
\begin{equation}\label{eq:rho}
 \rho(u)=\dfrac12+uh(u)(\gamma+\log u)+J(u),\qquad u>0,
\end{equation}
so that $K_\alpha(u)=\alpha+\rho(u)$.

\begin{proposition}\label{prop:sharp}
The function $\rho$ attains its global minimum on $(0,\infty)$.
Set
\begin{equation}\label{eq:alphastar}
 \alstar=-\min_{u>0}\rho(u).
\end{equation}
Then
\begin{equation}\label{eq:thresholdbounds}
 -\dfrac12<\alstar<\alzero,
\end{equation}
and
\begin{equation}\label{eq:iff}
 f_\alpha\text{ is logarithmically completely monotone on }(0,\infty)
 \quad\Longleftrightarrow\quad \alpha\ge\alstar.
\end{equation}
For every admissible parameter, including $\alpha=\alstar$, the logarithmic complete monotonicity is strict.
\end{proposition}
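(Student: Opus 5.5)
Existence of the minimum will come from the endpoint behavior in Proposition~\ref{prop:growth}. The function $\rho=K_\alpha-\alpha$ is continuous on $(0,\infty)$, and \eqref{eq:Kzero} gives $\rho(0+)=\tfrac12$. Moreover $\tfrac{u}{12}(\gamma+\log u)<0$ for small $u$, so \eqref{eq:Kzero} shows that $\rho(u)<\tfrac12$ for all sufficiently small $u>0$; fix one such $u_1$. By \eqref{eq:Klimit}, $\rho(u)\to1$ as $u\to\infty$. Choosing $0<\varepsilon<u_1<M$ with $\rho\ge\rho(u_1)$ outside $[\varepsilon,M]$, compactness shows that $\rho$ attains its infimum on $[\varepsilon,M]$, and that value is the global minimum. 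In particular $\min\rho<\tfrac12$, which is the inequality $\alstar>-\tfrac12$. The upper bound $\alstar<\alzero$ follows from the proof of Proposition~\ref{prop:positive}, which gives $\rho(u)>\tfrac12-\del$ pointwise. At the minimizer this yields $\min\rho>\tfrac12-\del$, so $\alstar<\alzero$ strictly; strictness holds because the minimum is attained.

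For sufficiency in \eqref{eq:iff}, let $\alpha\ge\alstar$. Then $K_\alpha=\alpha+\rho\ge0$ on $(0,\infty)$, and $K_\alpha$ vanishes at most on the set of minimizers, and only when $\alpha=\alstar$. Since $\rho$ is continuous and nonconstant (its limits at $0$ and $\infty$ differ), $K_\alpha$ is strictly positive on a nonempty open set. Proposition~\ref{prop:allorders} then gives, for every $m\ge1$ and $t>0$, an integral of a nonnegative continuous function that is positive on an open set. Each alternating logarithmic derivative is therefore strictly positive, and this includes $\alpha=\alstar$.

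Necessity is the main step. Suppose $\alpha<\alstar$, so $K_\alpha(u_0)<0$ at a minimizer $u_0$. Logarithmic complete monotonicity would make $\Ph$ completely monotone, and by Bernstein--Widder $\Ph$ would be the Laplace transform of a positive measure. By Proposition~\ref{prop:kernel} it is also the Laplace transform of the signed measure $K_\alpha(u)\dd u$. Uniqueness of Laplace transforms of locally finite signed measures with convergent transforms for all $t>0$ then forces $K_\alpha(u)\dd u$ to be that positive measure. This contradicts the negativity of the continuous function $K_\alpha$ on a neighborhood of $u_0$.

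To avoid invoking Bernstein--Widder directly, I would instead use the Post--Widder inversion formula. It recovers $K_\alpha(u_0)$ as the limit of $\frac{(-1)^n}{n!}(n/u_0)^{n+1}\Ph^{(n)}(n/u_0)$, and this limit holds at every continuity point of the bounded kernel $K_\alpha$. If $f_\alpha$ were logarithmically completely monotone, each of these quantities would be nonnegative, giving $K_\alpha(u_0)\ge0$, a contradiction. The only care needed is to confirm that Post--Widder applies to the bounded continuous density given by Proposition~\ref{prop:growth}, which is standard.
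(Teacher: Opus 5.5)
Your proposal is correct and follows essentially the paper's proof. Attainment of the minimum comes from the endpoint limits, the bound $\alstar<\alzero$ from Proposition~\ref{prop:positive} evaluated at the minimizer, and your Post--Widder route for necessity is exactly the paper's localization with the kernels $p_n$, which the paper justifies directly through the second-moment computation \eqref{eq:moments} instead of citing the inversion theorem; the only cosmetic difference is at $\alpha=\alstar$, where the paper obtains an open set of positivity from $K_\alpha(0)=\alpha+\tfrac12>0$ rather than from the nonconstancy of $\rho$, and both work.
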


\begin{proof}
By Proposition~\ref{prop:growth},
\[
 \rho(0+)=\dfrac12,\qquad \rho(\infty)=1.
\]
Also, \eqref{eq:Kzero} gives
\[
 \rho(u)-\dfrac12
   =\dfrac u{12}(\gamma+\log u)+O(u^3|\log u|)<0
\]
for all sufficiently small positive $u$.
Consequently the infimum of $\rho$ is strictly below both endpoint limits, so continuity implies that it is attained at some $u_0\in(0,\infty)$.
Proposition~\ref{prop:positive} with $\alpha=0$ gives
\[
 \rho(u_0)>\dfrac12-\del=-\alzero,
\]
whereas $\rho(u_0)<1/2$.
Negating these inequalities proves \eqref{eq:thresholdbounds}.

If $\alpha\ge\alstar$, then $K_\alpha(u)\ge0$ for every $u>0$.
Since
\[
 K_\alpha(0)=\alpha+\dfrac12\ge\alstar+\dfrac12>0,
\]
the kernel is strictly positive on a nonempty interval adjacent to zero.
Equation~\eqref{eq:allorders} therefore proves strict logarithmic complete monotonicity.

For necessity, we give an elementary localization argument.
Suppose $f_\alpha$ is logarithmically completely monotone, and fix $u_0>0$.
For $n\ge1$, let
\begin{equation}\label{eq:pn}
 p_n(u)=\dfrac{(n/u_0)^{n+1}}{n!}\,u^n e^{-nu/u_0},\qquad u>0.
\end{equation}
These nonnegative kernels have mass one, and direct integration gives
\begin{equation}\label{eq:moments}
 \int_0^\infty (u-u_0)^2p_n(u)\dd u
        =\dfrac{u_0^2(n+2)}{n^2}\longrightarrow0.
\end{equation}
Because $K_\alpha$ is bounded and continuous, splitting its integral into $|u-u_0|<\varepsilon$ and its complement, and using \eqref{eq:moments}, proves
\[
 \int_0^\infty p_n(u)K_\alpha(u)\dd u\longrightarrow K_\alpha(u_0).
\]
On the other hand, \eqref{eq:allorders} with $m=n+1$ and $t=n/u_0$ shows that each integral on the left is nonnegative.
Thus $K_\alpha(u_0)\ge0$.
Since $u_0$ was arbitrary, $\alpha+\rho(u)\ge0$ for all $u>0$, or equivalently $\alpha\ge\alstar$.
\end{proof}

\begin{remark}\label{rem:sharp}
Proposition~\ref{prop:sharp} separates the explicit sufficient bound from the actual threshold.
It does not evaluate the minimum in \eqref{eq:alphastar} in closed form, and it does not assert uniqueness of its minimizer.
In particular, the necessary condition $\alpha\ge-1/2$ discussed in \cite[Section~4]{GQ2015} is not sufficient.
There is also a direct check at the endpoint: since $H(t)>0$,
\[
 f_{-1/2}(t)=\exp\bigl(-H(t)\log t\bigr)<1\qquad(t>1),
\]
while \eqref{eq:asymptoticf}, whose asymptotic derivation holds for every fixed real $\alpha$, gives $f_{-1/2}(t)\to1$ as $t\to\infty$.
Such a function cannot be decreasing on $(1,\infty)$.
\end{remark}

\begin{remark}
The construction shows why an additive parameter is effective: changing $\alpha$ translates the entire Laplace density vertically.
The estimates of Section~4 yield an explicit admissible interval without having to locate its minimum.
A closed-form evaluation of \eqref{eq:alphastar}, or sharper explicit estimates for it, would require a more precise analysis of the regularized integral in \eqref{eq:rho}.
No extension to discrete or basic digamma analogues is asserted here.
\end{remark}

\appendix
\section{An auxiliary inequality in an earlier argument}\label{sec:appendix}

This appendix records the precise obstruction in the proof of \cite[Proposition~1.8]{Bouali}, referring specifically to arXiv version~1, dated 3~June~2022.
The argument on page~10 asserts that
\begin{equation}\label{eq:gn}
 g_n(x):=\dfrac{n!}{4x^{n+1}}
             +(-1)^n\Theta^{(n+1)}(x)\log x\ge0
 \qquad(n\ge1,\ x\ge1).
\end{equation}
We show that $g_{20}(5)<0$ by an exact estimate rather than by a numerical sign test.

From \eqref{eq:thetaH},
\[
 \Theta^{(21)}(5)=-\int_0^\infty u^{21}e^{-5u}h(u)\dd u.
\]
Put
\begin{equation}\label{eq:A}
 A=\dfrac{5^{21}}{20!}\int_0^\infty u^{21}e^{-5u}h(u)\dd u.
\end{equation}
Then
\begin{equation}\label{eq:scaledg}
 \dfrac{5^{21}}{20!}g_{20}(5)=\dfrac14-A\log5.
\end{equation}
For $u>0$, the geometric series gives
\[
 \dfrac{e^{-u}}{(1-e^{-u})^2}=\sum_{k=1}^\infty ke^{-ku},
 \qquad
 h(u)=\dfrac1{u^2}-\sum_{k=1}^\infty ke^{-ku}.
\]
Both terms are integrable after multiplication by $u^{21}e^{-5u}$.
Tonelli's theorem for the nonnegative series and the gamma integral therefore yield
\begin{equation}\label{eq:Aseries}
 A=\dfrac14-21\cdot5^{21}
          \sum_{k=1}^\infty\dfrac{k}{(k+5)^{22}}.
\end{equation}
The function $s\mapsto s/(s+5)^{22}$ is strictly decreasing for $s\ge1$, since its derivative is $(5-21s)/(s+5)^{23}$.
Hence
\begin{align}
 \sum_{k=1}^\infty\dfrac{k}{(k+5)^{22}}
 &\le\dfrac1{6^{22}}+\dfrac2{7^{22}}
                 +\int_2^\infty\dfrac{s}{(s+5)^{22}}\dd s\notag\\
 &=\dfrac1{6^{22}}+\dfrac2{7^{22}}
       +\dfrac1{20\cdot7^{20}}-\dfrac5{21\cdot7^{21}}.
       \label{eq:seriesbound}
\end{align}
Let $T$ be $21\cdot5^{21}$ times the last expression.
This is a rational number, and direct simplification gives
\begin{equation}\label{eq:rationalcheck}
 \dfrac1{12}-T
 =\dfrac{3{,}096{,}318{,}751{,}192{,}320{,}520{,}033{,}784{,}171{,}707}
        {24{,}505{,}586{,}101{,}564{,}400{,}033{,}358{,}935{,}683{,}497{,}984}
 >0.
\end{equation}
Equations~\eqref{eq:Aseries}--\eqref{eq:rationalcheck} therefore give $A\ge1/4-T>1/6$.
Also $\log5>3/2$.
For an elementary verification of the latter inequality, the exponential series gives
\[
 e=\sum_{j=0}^\infty\dfrac1{j!}
 <\dfrac83+\dfrac1{24}\sum_{j=0}^\infty\dfrac1{5^j}
 =\dfrac{87}{32}<\dfrac{11}{4},
\]
so $e^3<(11/4)^3<25$ and hence $e^{3/2}<5$.
Substitution into \eqref{eq:scaledg} now yields
\[
 \dfrac{5^{21}}{20!}g_{20}(5)
 <\dfrac14-\dfrac16\cdot\dfrac32=0.
\]
This disproves \eqref{eq:gn}.
It does not disprove the logarithmic complete monotonicity asserted in \cite[Proposition~1.8]{Bouali}; that conclusion follows independently from Theorem~\ref{thm:main}, since $-1/4>\alzero$.

\enlargethispage{3\baselineskip}

\end{document}